\documentclass[12pt, a4]{amsart}
\usepackage{amscd, amsmath, amssymb}
\usepackage{fullpage}

\theoremstyle{plain}
\newtheorem{thm}{Theorem}[section]

\newtheorem{pro}[thm]{Proposition}
\theoremstyle{definition}

\newtheorem{ex}[thm]{Example}

\numberwithin{equation}{section}


\begin{document}

\title[Perturbed Hammerstein integral equations]{Positive and increasing solutions of perturbed Hammerstein integral equations with derivative dependence}  

\date{}

\author[G. Infante]{Gennaro Infante}
\address{Gennaro Infante, Dipartimento di Matematica e Informatica, Universit\`{a} della
Calabria, 87036 Arcavacata di Rende, Cosenza, Italy}%
\email{gennaro.infante@unical.it}%

\begin{abstract} 
We discuss the existence and non-existence of non-negative, non-decreasing solutions of certain perturbed Hammerstein integral equations with derivative dependence. We present some applications to nonlinear, second order boundary value problems subject to fairly general functional boundary conditions. The approach relies on classical fixed point index theory. 
\end{abstract}

\subjclass[2010]{Primary 45G15, secondary 34B10, 34B18, 47H30}

\keywords{Fixed point index, cone, positive solution, inceasing solution, functional boundary conditions}

\maketitle

\centerline{\it
Dedicated to Professor Juan J. Nieto on the occasion of his sixtieth birthday.
}

\section{Introduction}
The study of perturbed Hammerstein integral equations often arises in the study of real world phenomena. For example the equation
\begin{equation*}
u(t)= t h(u(1)) + \int_0^1 k(t,s)f(s,u(s))\,ds,
\end{equation*}
occurs when dealing with the solvability of the boundary value problem~(BVP)
\begin{equation}\label{bvp-intro}
u''(t)+f(t,u(t))=0,\ u(0)=0,\ u'(1)=h(u(\beta)).
\end{equation}
The BVP~\eqref{bvp-intro} can be used as a model for the steady-states of heated bar of length~$1$, where the left end is kept at ambient temperature and a controller in the right end adds or removes heat according to the temperature  registered by a sensor placed in a point $\beta$ of the bar. The controller placed in the right end may act in a linear or in a nonlinear manner, depending on the nature of the function $h$. There exists now a (relatively) wide literature on heat-flow problems of this kind, we refer the reader to the papers~\cite{fama, df-gi-jp-prse, hGlkDaC, gijwnodea, gijwems,
 nipime, jwpomona, jwwcna04, webb-therm} for the cases of linear response and to~\cite{gi-caa, gi-tmna, kamkee, kamkee2, kapala, palamides} for the nonlinear cases.

Note that the idea of using perturbed Hammerstein integral equations in order to deal with the existence of solutions of BVPs with nonlinear 
BCs has been used with success in a number of papers, see the manuscripts~\cite{amp, Cabada1, ac-gi-at-tmna, genupa, dfdorjp, Goodrich3, Goodrich4, Goodrich5, Goodrich6, Goodrich7, Goodrich8, Goodrich9, gi-caa, kejde, paola, ya1,ya2} and references therein. In particular, in the recent paper~\cite{Goodrich9}, by means of the classical Krasnosel'ski\u\i{}-Guo fixed point theorem of cone compression/expansion, Goodrich studied the existence of positive solutions of the equation
\begin{equation}\label{Chris}
u(t)={\gamma_1}(t)h_{1}(\alpha_{1}[u])+ h_{2}(\alpha_{2}[u]) +\lambda \int_0^1 k(t,s)f(s,u(s))\,ds,
\end{equation}
where $\lambda$ is parameter and $\phi_{1}, \phi_{2}$ are linear functionals on the space $C[0,1]$ realized as Stieltjes integrals with signed measures, namely
\begin{equation}\label{signed}
\alpha_{i}[u]:=\int_{0}^{1}u(s)\,dA_i(s),
\end{equation}
with $A_i$ a function of bounded variation. The results of~\cite{Goodrich9} complement the earlier ones by the author~\cite{gi-caa}, where only positive measures were employed.

The functional formulation~\eqref{signed} has proven to be particularly useful in order to handle multi-point and integral BCs. For an introduction to nonlocal BCs, we refer the reader to the reviews~\cite{Cabada1, Conti, rma, sotiris, Stik, Whyburn} and the papers~\cite{kttmna, ktejde, Nieto, Picone, jw-gi-jlms}.

On the other hand, in a recent paper~\cite{webb-ejqtde}, Webb gave, using fixed point index theory, a general set-up for  the existence of positive solutions of second order BVPs where linear BCs of the type $\alpha[u']=\int_{0}^{1}u'(s)\,dA(s)$ occur, a particular example being the BVP
\begin{equation*}
u''(t)+f(t,u(t))=0,\ u(0)=0,\ u'(1)=\alpha[u'].
\end{equation*}

Also by means fixed point index theory, Zang and co-authors~\cite{zang} discussed the existence of positive, increasing solutions of the BVP
\begin{equation*}
u''(t)+f(t,u(t),u'(t))=0,\ u(0)=\alpha[u],\ u'(1)=0,
\end{equation*}
where $\alpha[u]$ is a linear, bounded functional on the space $C[0,1]$. 

Nonlinear functional BCs were investigated by Mawhin et al. in~\cite{Mawhin}, where the authors prove, by means of degree theory, the existence of a solution of a system of BVPs which, in the scalar case, reduces to 
\begin{equation*}
u''(t)+f(t,u(t),u'(t))=0,\ u(0)=a,\ u'(1)=N[u'],
\end{equation*}
here $a$ is a fixed number and $N$ is a compact functional defined on the space $C[0,1]$.

Here we study an integral equation related to~\eqref{Chris}, where we allow a dependence in the derivative of the nonlinearity $f$ and we allow the (not necessarily linear) functionals to act on the space $C^1[0,1]$, namely
\begin{equation}\label{perhamm-intro}
u(t)=\eta_1{\gamma_1}(t)h_1[u]+ \eta_2{\gamma_2}(t)h_2[u]  +\lambda \int_0^1 k(t,s)f(s,u(s), u'(s))\,ds,
\end{equation}
where $h_1, h_2$ are suitable compact functionals on the space $C^1[0,1]$ and $\eta_1, \eta_2, \lambda$ are non-negative parameters.  Multi-parameter problems of this kind have been studied recently by the author~\cite{gi-tmna} in the context of systems of elliptic equations (without gradient dependence) subject to functional BCs. Here, in the spirit of the paper~\cite{gi-tmna}, we provide existence and non-existence results for the equation~\eqref{perhamm-intro} that take into account the parameters $\eta_1, \eta_2, \lambda$. One advantage of considering the functionals in the space $C^1[0,1]$ is that it allows us to consider an interplay between function and derivative dependence in the BCs, this is illustrated in the examples of Section 3. Our methodology involves the classical fixed point index for the existence result and an elementary argument for the non-existence
result.

As an application we discuss the solvability of the BVP
\begin{equation*}
u''(t)+\lambda f(t,u(t),u'(t))=0,\ u(0)=\eta_1h_1[u],\ u'(1)=\eta_2h_2[u],
\end{equation*}
and illustrate, in two examples, 
how our methodology can be used in presence of nonlinear functionals that involve also nonlocal conditions. 

\section{Main results}
In this Section we  study the existence and non-existence of solutions of the perturbed Hammerstein equation of the type
\begin{equation}\label{perhamm}
u(t)=\eta_1{\gamma_1}(t)h_1[u]+ \eta_2{\gamma_2}(t)h_2[u]  +\lambda \int_0^1 k(t,s)f(s,u(s), u'(s))\,ds:=Tu(t).
\end{equation}
Throughout the paper we make the following assumptions on the terms that occur in~\eqref{perhamm}. 
\begin{itemize}
\item[$(C_1)$] $k:[0,1] \times[0,1]\rightarrow [0,+\infty)$ is measurable and continuous in $t$ for almost every  (a.e.) ~$s$,
that is, for every $\tau\in [0,1] $ we have
\begin{equation*}
\lim_{t \to \tau} |k(t,s)-k(\tau,s)|=0 \ \text{for a.e.}\ s \in [0,1] ,
\end{equation*}{}
furthermore there exist a function $\Phi \in L^{1}(0,1)$ such that
$0\leq  k(t,s)\leq \Phi(s)$ for $t \in [0,1]$ and a.e.~$s\in [0,1]$.

\item[$(C_2)$] The partial derivative $\partial_t k(t,s)$ is non-negative and continuous in $t$ for a.e.~$s$ and there exists $\Psi \in L^{1}(0,1)$ such that
$0\leq  \partial_t k(t,s) \leq \Psi(s)$ for $t \in [0,1]$ and a.e.~$s\in [0,1]$.

\item[$(C_3)$]  $f:[0,1]\times  [0,+\infty) \times [0,+\infty) \to [0,+\infty)$ is continuous.

\item[$(C_4)$]  $\gamma_1, \gamma_2 \in C^1 [0,1] $ and $\gamma_1 (t), \gamma_2 (t), \gamma'_1 (t), \gamma'_2 (t)\geq 0\ge 0\ \text{for every}\ t\in [0,1]$.

\item[$(C_5)$] $\eta_1, \eta_2, \lambda \in [0,+\infty)$.
\end{itemize}
Due to the hypotheses above, we use the space $C^1[0,1]$ endowed with the norm $$\|u\|:=\max\{\|u\|_\infty, \|u'\|_\infty\},$$ where $\|u\|_\infty:=\max_{t\in[0,1]}|u(t)|$. 

We recall that a cone $K$ in a real Banach space $X$ is a closed convex set such that $\lambda x\in K$ for every $x \in K$ and for all $\lambda\geq 0$ and satisfying $K\cap (-K)=\{0\}$. Here, in order to discuss the solvability of~\eqref{perhamm}, we work in the cone of non-negative, non-decreasing functions
$$
P:=\{u\in C^1[0,1]:\ u(t),u'(t)\ge 0\ \text{for every}\ t\in [0,1] \},
$$
and we require the nonlinear functionals $h_1,h_2$ to act positively on the cone $P$ and to be compact, that is:
\begin{itemize}
\item[$(C_6)$] $h_1,h_2: P \to [0,+\infty)$ are continuous and map bounded sets into bounded sets.
\end{itemize}

We make use of the following basic properties of the fixed point index, we refer the reader to~\cite{amann, guolak} for more details.

\begin{pro}\cite{amann, guolak} Let $K$ be a cone in a real Banach space $X$ and let
$D$ be an open bounded set of $X$ with $0 \in D_{K}$ and
$\overline{D}_{K}\ne K$, where $D_{K}=D\cap K$.
Assume that $\tilde{T}:\overline{D}_{K}\to K$ is a compact map such that
$x\neq \tilde{T}x$ for $x\in \partial D_{K}$. Then the fixed point index
 $i_{K}(\tilde{T}, D_{K})$ has the following properties:
\begin{itemize}
\item[$(1)$] If there exists $e\in K\setminus \{0\}$
such that $x\neq \tilde{T}x+\lambda e$ for all $x\in \partial D_K$ and all
$\lambda>0$, then $i_{K}(\tilde{T}, D_{K})=0$.

\item[$(2)$] If $\tilde{T}x \neq \lambda x$ for all $x\in
\partial D_K$ and all $\lambda > 1$, then $i_{K}(\tilde{T}, D_{K})=1$.

\item[(3)] Let $D^{1}$ be open in $X$ such that
$\overline{D^{1}}_{K}\subset D_K$. If $i_{K}(\tilde{T}, D_{K})=1$ and $i_{K}(\tilde{T},
D_{K}^{1})=0$, then $\tilde{T}$ has a fixed point in $D_{K}\setminus
\overline{D_{K}^{1}}$. The same holds if 
$i_{K}(\tilde{T}, D_{K})=0$ and $i_{K}(\tilde{T}, D_{K}^{1})=1$.
\end{itemize}
\end{pro}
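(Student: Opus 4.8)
The plan is to derive all three statements from the standard axiomatic properties of the fixed point index on a cone (equivalently, on a retract of a Banach space)---normalization, additivity, homotopy invariance and the solution property---as constructed in \cite{amann, guolak}; I take the existence of such an index and these four properties as given, since reproving them amounts to the Dugundji/Leray--Schauder construction and is not the point here. Throughout, one should keep in mind that $K$ is a retract of $X$, that $\tilde T$ compact means $\tilde T(\overline D_K)$ is relatively compact, and that every homotopy introduced below must be checked to be a compact map with no fixed points on the relevant boundary, so that the index invocations are legitimate.

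For part $(2)$ I would use a homotopy to the zero map. Define $H:[0,1]\times\overline D_K\to K$ by $H(t,x)=t\,\tilde Tx$; this is compact. It is admissible on $\partial D_K$: for $t=0$ because $0\in D_K$ is interior, so $0\notin\partial D_K$; for $t=1$ by the standing hypothesis $x\neq\tilde Tx$ on $\partial D_K$; and for $t\in(0,1)$ a fixed point would force $\tilde Tx=\frac1t x$ with $\frac1t>1$, contradicting the assumption of $(2)$. Homotopy invariance then gives $i_K(\tilde T,D_K)=i_K(H(1,\cdot),D_K)=i_K(H(0,\cdot),D_K)=i_K(0,D_K)$, and the last term equals $1$ by normalization since $0\in D_K$.

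For part $(1)$ I would push in the direction of $e$. Since $\tilde T(\overline D_K)$ is bounded, say $\|\tilde Tx\|\le M$ on $\overline D_K$, and $\overline D_K$ is bounded, say $\|x\|\le N$ there, I fix $\sigma>(M+N)/\|e\|$ and consider $H(t,x)=\tilde Tx+t\sigma e$ for $t\in[0,1]$, again a compact map. Admissibility on $\partial D_K$ follows from $x\neq\tilde Tx$ at $t=0$ and from the hypothesis $x\neq\tilde Tx+\lambda e$ for all $\lambda>0$ when $t>0$. Moreover $H(1,\cdot)$ has \emph{no} fixed point anywhere in $\overline D_K$, because $\|\tilde Tx+\sigma e\|\ge\sigma\|e\|-M>N\ge\|x\|$. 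Hence the solution property yields $i_K(H(1,\cdot),D_K)=0$, and homotopy invariance gives $i_K(\tilde T,D_K)=0$.

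For part $(3)$, note that $D_K\setminus\overline{D^1_K}$ is open in $K$, and since $i_K(\tilde T,D^1_K)$ is defined, $\tilde T$ has no fixed point on $\partial D^1_K$; additivity of the index then gives
\begin{equation*}
i_K(\tilde T,D_K)=i_K(\tilde T,D^1_K)+i_K\bigl(\tilde T,D_K\setminus\overline{D^1_K}\bigr).
\end{equation*}
Substituting the two hypothesised values produces $i_K(\tilde T,D_K\setminus\overline{D^1_K})=1-0=1$ in the first case and $=0-1=-1$ in the second; either way it is nonzero, so the solution property forces a fixed point of $\tilde T$ in $D_K\setminus\overline{D^1_K}$. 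There is no genuine obstacle in any of this: the statement is a repackaging of the index axioms, and the only points requiring care are the compactness of each homotopy and the absence of fixed points on the boundaries used, which I have flagged above.
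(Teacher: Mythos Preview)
Your argument is correct: each of the three parts is derived cleanly from the standard axioms of the fixed point index on a retract (normalization, homotopy invariance, additivity/excision, solution property), and the homotopies you introduce are compact and admissible for the reasons you give. One small point worth making explicit in part~(1) is that $H(t,\cdot)$ indeed maps into $K$ because $K$, being a convex cone, is closed under addition; you implicitly use this but do not state it.

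As for comparison with the paper: there is nothing to compare. The paper does not prove this proposition at all---it is quoted verbatim as a known tool, with the reader referred to \cite{amann, guolak} for details. So you have supplied a self-contained proof where the paper simply cites the literature. Your derivation is the standard one found in those references, so in that sense it matches what the cited sources do; it is just more than the present paper undertakes.
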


We define the set 
$$
P_{\rho}:=\{u\in P: \|u\|<\rho\}
$$
and the quantities
$$
\overline{f}_{\rho}:=\max_{[0,1]\times [0,\rho]^2}f(t,u,v),\quad \underline{f}_{\rho}:=\min_{[0,1]\times [0,\rho]^2}f(t,u,v),\quad H_{i,\rho}:=\sup_{u\in \partial P_{\rho}} h_{i}[u],
$$
$$
K:=\int_0^1 k(1,s)\,ds,\quad K^*:=\sup_{t\in[0,1]}\int_0^1 \partial_t k(t,s)\,ds.
$$

With these ingredients we can state the following existence and localization result.
\begin{thm}\label{thmsol}
Assume there exist $r,R\in (0,+\infty)$, with $r<R$ such that the following two inequalities are satisfied:
\begin{equation}\label{idx1}
\max\Bigl\{\lambda \overline{f}_R K+\sum_{i=1}^{2}\eta_i{\gamma_i}(1)H_{i,R},\  \lambda \overline{f}_R  K^{*}+\sum_{i=1}^{2} \eta_i\|{\gamma'_i}\|_{\infty} H_{i,R} \Bigr \}\leq R,
\end{equation}
\begin{equation}\label{idx0}
\lambda\underline{f}_r \min\{  K,    K^{*} \}\geq r.
\end{equation}

Then the equation~\eqref{perhamm} has a solution $u\in P$ such that $$r\leq \|u\| \leq R.$$
\end{thm}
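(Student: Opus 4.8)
The plan is to realise solutions of~\eqref{perhamm} as fixed points of $T$ in the cone $P$ and to combine parts~(2),~(1) and~(3) of the Proposition applied to $P_r$ and $P_R$. The first step is to verify that $T$ is a compact map with $T(P)\subset P$. Non-negativity of $k$, $f$, $\gamma_i$, $h_i$ and of the parameters gives $Tu(t)\ge 0$ for $u\in P$, while $(C_2)$ legitimises differentiation under the integral sign, so that
\[
(Tu)'(t)=\eta_1{\gamma_1'}(t)h_1[u]+\eta_2{\gamma_2'}(t)h_2[u]+\lambda\int_0^1\partial_t k(t,s)\,f(s,u(s),u'(s))\,ds ,
\]
which is continuous in $t$ and, by $(C_2)$ and $(C_4)$, non-negative; hence $Tu\in P$. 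Compactness then follows from a standard Arzel\`a--Ascoli argument in $(C^1[0,1],\|\cdot\|)$: the $L^1$-dominations $\Phi,\Psi$, the continuity in $t$ for a.e.\ $s$ of $k$ and $\partial_t k$, the continuity of $f$, and the continuity and bounded-to-bounded property of $h_1,h_2$ make $\{Tu\}$ and $\{(Tu)'\}$ uniformly bounded and equicontinuous on bounded subsets of $P$. If $T$ has a fixed point on $\partial P_r$ or $\partial P_R$ the conclusion already holds, so from now on we assume it does not, and the indices $i_K(T,P_r)$ and $i_K(T,P_R)$ are defined.

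To get $i_K(T,P_R)=1$ I would use part~(2): suppose $Tu=\mu u$ for some $u\in\partial P_R$ and some $\mu>1$. The crucial remark is that every $u\in P$ is non-negative and non-decreasing, so $\|u\|_\infty=u(1)$; moreover $0\le u(s),u'(s)\le R$, whence $f(s,u(s),u'(s))\le\overline f_R$ and $h_i[u]\le H_{i,R}$. Evaluating $Tu=\mu u$ at $t=1$ (and using $\int_0^1 k(1,s)\,ds=K$), and separately estimating $(Tu)'$ from the displayed formula (using $\int_0^1\partial_t k(t,s)\,ds\le K^*$), give
\[
\mu\|u\|_\infty\le\lambda\overline f_R K+\sum_{i=1}^2\eta_i{\gamma_i}(1)H_{i,R},\qquad \mu\|u'\|_\infty\le\lambda\overline f_R K^*+\sum_{i=1}^2\eta_i\|{\gamma_i'}\|_\infty H_{i,R}.
\]
Taking the larger of the two right-hand sides and invoking~\eqref{idx1} yields $\mu\|u\|\le R=\|u\|$, contradicting $\mu>1$; hence $i_K(T,P_R)=1$.

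To get $i_K(T,P_r)=0$ I would use part~(1) with $e(t)=t\in P\setminus\{0\}$: suppose $u=Tu+\sigma e$ for some $u\in\partial P_r$ and some $\sigma>0$. Discarding the non-negative functional terms, evaluating at $t=1$, and using $f(s,u(s),u'(s))\ge\underline f_r$ on $[0,1]\times[0,r]^2$ together with $u(1)=\|u\|_\infty\le\|u\|=r$, one obtains
\[
r\ge u(1)\ge\lambda\underline f_r\int_0^1 k(1,s)\,ds+\sigma=\lambda\underline f_r K+\sigma\ge\lambda\underline f_r\min\{K,K^*\}+\sigma\ge r+\sigma ,
\]
the last inequality being~\eqref{idx0}; this is impossible, so $i_K(T,P_r)=0$ (one could equally argue via $(Tu)'$ at a point realising $K^*$). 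Since $r<R$ we have $\overline{P_r}\subset P_R$, so part~(3) of the Proposition produces a fixed point $u$ of $T$ in $P_R\setminus\overline{P_r}$, i.e.\ a solution of~\eqref{perhamm} in $P$ with $r\le\|u\|\le R$.

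I expect the one genuinely delicate step to be the compactness of $T$ in the $C^1$-norm — especially the equicontinuity of $\{(Tu)'\}$ and its interaction with the abstract functionals $h_1,h_2$ — whereas the index estimates are routine bookkeeping once one exploits $\|u\|_\infty=u(1)$ for $u\in P$ and splits the bound in~\eqref{idx1} into its ``function'' and ``derivative'' parts.
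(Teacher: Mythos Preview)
Your proof is correct and follows essentially the same route as the paper: verify $T:P\to P$ is compact, use part~(2) of the Proposition on $\partial P_R$ and part~(1) with $e(t)=t$ on $\partial P_r$, then conclude via part~(3). The only cosmetic difference is that the paper splits each index computation into the cases $\|u\|_\infty=\rho$ versus $\|u'\|_\infty=\rho$, whereas you handle both at once by exploiting $\|u\|_\infty=u(1)$ for $u\in P$ and evaluating at $t=1$; in particular your index-$0$ argument via $u(1)\le r$ and $K\ge\min\{K,K^*\}$ is a slight streamlining of the paper's case analysis.
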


\begin{proof}
It is routine to prove that, under the assumptions $(C_{1})-(C_{6})$, the operator $T$ maps $P$ into $P$ and is compact.

If $T$ has a fixed point either on $\partial {P_r}$ or $\partial {P_R}$ we are done. 

Assume now that $T$ is fixed point free on $\partial {P_r}\cup\partial {P_R}$, we are going to prove that $T$ has a fixed point in 
$ P_R\setminus \overline {P_r}$.

We firstly prove that 
 $
\sigma  u\neq Tu\ \text{for every}\ u\in \partial P_{R}\
\text{and every}\  \sigma >1.
$
If this does not hold, then there exist $u\in \partial P_{R}$ and $\sigma >1$ such that $\sigma  u= Tu$. 
Note that if $\| u\| = R$ either $\| u\|_{\infty} = R$ or $\| u'\|_{\infty} = R$. 

Assume that $\| u\|_{\infty} = R$. In this case we obtain, for $t\in [0,1]$,
\begin{multline}\label{ineq1}
\sigma u(t)=\eta_1{\gamma_1}(t)h_1[u]+ \eta_2{\gamma_2}(t)h_2[u]  +\lambda \int_0^1 k(t,s)f(s,u(s), u'(s))\,ds \\ 
\leq \eta_1{\gamma_1}(1)H_{1,R}+ \eta_2{\gamma_2}(1)H_{2,R}+\lambda \overline{f}_{R} \int_0^1 k(1,s)\,ds\leq R.
\end{multline}
Taking the supremum for $t\in [0,1]$ in~\eqref{ineq1} gives $\sigma \leq 1$, a contradiction.

Assume that $\| u'\|_{\infty} = R$. In this case we obtain, for $t\in [0,1]$,
\begin{multline}\label{ineq2}
\sigma u'(t)=\eta_1{\gamma'_1}(t)h_1[u]+ \eta_2{\gamma'_2}(t)h_2[u]  +\lambda \int_0^1 \partial_t k(t,s)f(s,u(s), u'(s))\,ds \\ 
\leq \eta_1\|{\gamma'_1}\|_{\infty} H_{1,R}+ \eta_2\|{\gamma'_2}\|_{\infty} H_{2,R}+\lambda \overline{f}_{R}  \int_0^1 \partial_t k(t,s)\,ds\leq R.
\end{multline}
Taking the supremum for $t\in [0,1]$ in~\eqref{ineq2} yields $\sigma \leq 1$, a contradiction.

Therefore we have $i_{P}(T, P_R)=1.$

We now consider the function $g(t):= t$ in $[0,1]$, note that $g\in P$. We show that 
\begin{equation*} 
u\neq Tu+\sigma g\ \text{for every}\ u\in \partial P_{r}\ \text{and every}\ \sigma  >0.
\end{equation*}
If not, there exists $u\in \partial P_{r}$ and $\sigma  >0$ such that
$
u= Tu+\sigma  g . 
$

Assume that $\| u\|_{\infty} = r$. In this case we obtain, for $t\in [0,1]$,
\begin{multline}\label{ineq3}
 u(t)= \eta_1{\gamma_1}(t)h_1[u]+ \eta_2{\gamma_2}(t)h_2[u]   \\ 
+ \lambda \int_0^1 k(t,s)f(s,u(s), u'(s))\,ds +\sigma t
\geq \lambda \underline{f}_{r} \int_0^1  k(t,s) \,ds+\sigma t.
\end{multline}
Taking the supremum for $t\in [0,1]$ in~\eqref{ineq3} gives $r\geq r+ \sigma$, a contradiction.

Assume that $\| u'\|_{\infty} = r$. In this case we obtain, for $t\in [0,1]$,
\begin{multline}\label{ineq4}
 u'(t)=\eta_1{\gamma'_1}(t)h_1[u]+ \eta_2{\gamma'_2}(t)h_2[u]  \\ 
+\lambda \int_0^1 \partial_t k(t,s)f(s,u(s), u'(s))\,ds + \sigma \geq  \lambda \underline{f}_{r}  \int_0^1 \partial_t k(t,s)\,ds +\sigma.
\end{multline}
Taking the supremum for $t\in [0,1]$ in~\eqref{ineq4} yields $r\geq r+ \sigma$, a contradiction.

Thus we obtain $i_{P}(T, P_{r})=0.$

Therefore we have
$$i_{P}(T, P_R \setminus \overline{P_r})=i_{P}(T, P_R )-i_{P}(T, P_r )=1,$$
which proves the result.
\end{proof}
We now prove, by an elementary argument, a non-existence result.
\begin{thm}\label{nonexthm}
Assume that there exist $\tau, \xi_1,  \xi_2\in (0,+\infty)$ such that
\begin{equation}
0\leq f (t,u,v)\leq \tau u,\ \text{for every}\ (t,u,v)\in [0,1]\times[0,\infty)^2,
\end{equation}
\begin{equation}
h_i[u]\leq \xi_i \|u\|_{\infty}, \ \text{for every}\ u \in P\ \text{and}\ i=1,2,
\end{equation}
\begin{equation}\label{nonexineq}
\lambda \tau K+\sum_{i=1}^{2}\eta_i \xi_i{\gamma_i}(1)<1.
 \end{equation}
Then the equation~\eqref{perhamm} has at most the zero solution in $P$. 
\end{thm}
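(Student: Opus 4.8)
The plan is to argue directly and elementarily from the integral identity $u=Tu$, with no recourse to the fixed point index. Suppose $u\in P$ is a solution of~\eqref{perhamm}; the goal is to show that $u$ is the zero function. The decisive remark is that, since $u$ is non-negative and non-decreasing, we have $\|u\|_\infty=u(1)$, and consequently the hypothesis $f(t,u,v)\le\tau u$ yields $f(s,u(s),u'(s))\le\tau u(s)\le\tau u(1)=\tau\|u\|_\infty$ for every $s\in[0,1]$.

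First I would evaluate the equation~\eqref{perhamm} at $t=1$, obtaining
\[
u(1)=\eta_1\gamma_1(1)h_1[u]+\eta_2\gamma_2(1)h_2[u]+\lambda\int_0^1 k(1,s)f(s,u(s),u'(s))\,ds .
\]
All summands on the right are non-negative, so I may bound each of them from above: the first two by $\eta_i\gamma_i(1)\xi_i\|u\|_\infty$ using $h_i[u]\le\xi_i\|u\|_\infty$, and the integral by $\lambda\tau\|u\|_\infty\int_0^1 k(1,s)\,ds=\lambda\tau K\|u\|_\infty$ using the bound on $f$ recorded above and the definition of $K$. This gives
\[
\|u\|_\infty=u(1)\le\Bigl(\lambda\tau K+\sum_{i=1}^{2}\eta_i\xi_i\gamma_i(1)\Bigr)\|u\|_\infty .
\]
By~\eqref{nonexineq} the constant in parentheses is strictly less than $1$, so this forces $\|u\|_\infty=0$; since $u\ge 0$ and $u$ is non-decreasing this means $u\equiv 0$ on $[0,1]$, whence also $u'\equiv 0$, i.e. $u$ is the zero element of $C^1[0,1]$.

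I do not anticipate a genuine obstacle here: this is precisely the elementary argument promised in the introduction. The only points deserving a line of care are the use of the order structure of $P$ to replace $u(s)$ and $h_i[u]$ by multiples of $\|u\|_\infty$, and the observation that evaluating at $t=1$ already captures the full sup-norm because $u$ is non-decreasing --- so, in contrast with the proof of Theorem~\ref{thmsol}, no estimate on $u'$ and no monotonicity of $k$ or of the $\gamma_i$ is needed for this argument.
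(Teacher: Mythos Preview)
Your proof is correct and essentially identical to the paper's: both bound the right-hand side of $u=Tu$ by $(\lambda\tau K+\sum_i\eta_i\xi_i\gamma_i(1))\|u\|_\infty$ and invoke~\eqref{nonexineq} to force $\|u\|_\infty=0$. The only cosmetic difference is that the paper estimates $u(t)$ for every $t$ (implicitly using the monotonicity of $\gamma_i$ and $k(\cdot,s)$) and then takes the supremum, whereas you evaluate directly at $t=1$ using $\|u\|_\infty=u(1)$.
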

\begin{proof}
Assume that there exist $u\in P\setminus {0}$ such that $u$ is a fixed point for $T$. Then $\|u\|_{\infty}=\rho$, for some $\rho>0$.
Then we have
\begin{multline}\label{ineq5}
 u(t)=\eta_1{\gamma_1}(t)h_1[u]+ \eta_2{\gamma_2}(t)h_2[u]  +\lambda  \int_0^1 k(t,s)f(s,u(s), u'(s))\,ds \\ 
\leq \eta_1{\gamma_1}(1)h_1[u]+ \eta_2{\gamma_2}(1)h_2[u]+\lambda \tau \int_0^1 k(1,s) u(s)\,ds\\
\leq \eta_1{\gamma_1}(1)\xi_1\|u\|_{\infty}+ \eta_2{\gamma_2}(1)\xi_2\|u\|_{\infty}+\lambda \tau  \|u\|_{\infty}\int_0^1 k(1,s) \,ds.
\end{multline}
Taking the supremum for $t\in [0,1]$ in~\eqref{ineq5} gives $\rho < \rho$, a contradiction.
\end{proof}
\section{Two examples}
We now illustrate the applicability of the results of Section 2. In particular we focus on the BVP 
\begin{equation}\label{bvpex}
u''(t)+\lambda f(t,u(t),u'(t))=0,\ u(0)=\eta_1h_1[u],\ u'(1)=\eta_2h_2[u]. 
\end{equation}
It is routine to show (for some details, see for example~\cite{gi-mc})
 that the solutions of~\eqref{bvpex} can be written in the form 
\begin{equation*}
u(t)=\eta_1{\gamma_1}(t)h_1[u]+ \eta_2{\gamma_2}(t)h_2[u] +\lambda \int_0^1 k(t,s)f(s,u(s), u'(s))\,ds,
\end{equation*}
where the kernel $k$ is the Green's function associated to the right focal BCs
$$u(0)=u'(1)=0,$$ namely
\begin{equation*}
k(t,s)=\begin{cases}
  s & \text{if}\;s \leq t,\\
  t &\text{if}\; s>t,
\end{cases}
\end{equation*}
and ${\gamma_1}(t)=1$ and ${\gamma_2}(t)=t$ are solutions of the BVPs
$$
{\gamma_1}''(t)=0,\ {\gamma_1}(0)=1,\ {\gamma_1}'(1)=0. 
$$
$$
{\gamma_2}''(t)=0,\ {\gamma_2}(0)=0,\ {\gamma_2}'(1)=1. 
$$
In this case we have 
\begin{equation*}
{\gamma_1}'(t)=0,\quad {\gamma_2}'(t)=1\quad \text{and}\quad
\partial_t k(t,s)=\begin{cases}
  0 & \text{if}\;s \leq t,\\
  1 &\text{if}\; s>t.
\end{cases}
\end{equation*}
Therefore the assumptions $(C_1), (C_2)$ and $(C_4)$ are satisfied with $\Phi(s)=s$ and $\Psi(s)=1$.
By direct calculation we have $K=\frac{1}{2}$ and $K^*=1$.
\begin{ex}
Let us consider the BVP
\begin{equation}\label{bvpex1}
u''(t)+{\lambda e^{t(u(t)+u'(t))}}=0,\ u(0)=\eta_1h_1[u],\ u'(1)=\eta_2h_2[u], 
\end{equation}
where
$$
h_{1}[u]=u(1/4)+(u'(3/4))^2,\ h_{2}[u]=\int_{0}^1u^3(s)+u'(s)\,ds.
$$
Let us fix $r=1/20$ and $R=1$, then we have 
$$\overline{f}_{1}=e^2, \underline{f}_{\frac{1}{20}}=1, H_{1,1},H_{2,1}\leq 2.$$
Therefore the condition~\eqref{idx1} is satisfied if
\begin{equation}\label{idx1ex}
\max\Bigl\{\lambda \frac{e^2}{2}+2\eta_1+2\eta_2,\  \lambda e^2 +2\eta_2 \Bigr \}\leq 1,
\end{equation}
and the condition~\eqref{idx0} reads 
\begin{equation}\label{idx0ex}
\lambda\geq \frac{1}{10}.
\end{equation}

For the range of parameters that satisfy the inequalities~\eqref{idx1ex}-\eqref{idx0ex}, Theorem~\ref{thmsol} provides the existence of at least a nondecreasing, nonnegative solution $u$ of the BVP~\eqref{bvpex1} with $1/20\leq \|u\| \leq 1$; this occurs, for example, for $\lambda=1/10, \eta_1=1/11, \eta_2=1/12$.
\end{ex}
\begin{ex}
Let us now consider the BVP
\begin{equation}\label{bvpex2}
u''(t)+{\lambda u(t)(2-t\sin (u(t)u'(t)))}=0,\ u(0)=\eta_1h_1[u],\ u'(1)=\eta_2h_2[u], 
\end{equation}
where
$$
h_{1}[u]=u(1/4)\cos^2 (u'(3/4)),\ h_{2}[u]=u(3/4)\sin^2 (u'(1/4)).
$$
In this case we may take $\tau=3, \xi_1=\xi_2=1$. Then the condition~\eqref{nonexineq} required by Theorem~\ref{nonexthm} reads
\begin{equation}\label{bvpex2rg}
\frac{3}{2}\lambda+\eta_1+\eta_2<1.
\end{equation}

For the range of parameters that satisfy the inequality~\eqref{bvpex2rg}, Theorem~\ref{nonexineq} guarantees that the only possible solution in $P$ of the BVP~\eqref{bvpex2} is the trivial one; this occurs, for example, for $\lambda=1/3, \eta_1=1/4, \eta_2=1/5$.
\end{ex}

\section*{Acknowledgement}
G. Infante was partially supported by G.N.A.M.P.A. - INdAM (Italy).

\end{document}